\documentclass[10pt,letterpaper]{amsart}

\usepackage{lmodern}
\usepackage{amsmath,amssymb,amsthm,mathtools}
\usepackage[hidelinks]{hyperref}
\usepackage{microtype}

\newtheorem{theorem}{Theorem}[section]
\newtheorem{lemma}[theorem]{Lemma}
\newtheorem{proposition}[theorem]{Proposition}
\newtheorem{corollary}[theorem]{Corollary}

\newcommand{\N}{\mathcal{N}}
\newcommand{\R}{\mathbb{R}}
\newcommand{\C}{\mathbb{C}}
\newcommand{\HH}{\mathbb{H}}
\newcommand{\Ant}{\operatorname{Ant}}

\newcommand{\dd}{\,\mathrm{d}}
\newcommand{\preceqint}{\preceq}

\title{A common interleaver for two antichain polynomials on \([k]\times P_{n,s} \)}
\author[Jian Ding]{Jian Ding}
\address{College of Science, Jiujiang University, Jiujiang 332005, China}
\email{dingjianld@163.com}
\author[Lingen Ding]{Lingen Ding\textsuperscript{1}}
\address{School of Mathematics, Foshan University, Foshan 528000, China}
\email{lingending@fosu.edu.cn}

\date{}

\keywords{Antichain generating polynomial, interlacing, common interleaver, real stability, Jacobi polynomial, Pick function}

\begin{document}
\maketitle
\footnotetext[1]{Supported by NSFC (No.~12501033) and Guangdong Basic and Applied Basic Research Foundation (No.~2024A1515110043).}

\begin{abstract}
The first author and Dong \cite{DD} proposed three conjectures on antichain generating polynomials. Jiang \cite{Jiang} recently proved Conjectures 4.3
and 4.5, concerning real-rootedness and \(\gamma\)-positivity. We prove Conjecture 4.2 by adapting his method from \([k]\times [2] \times [n]\) to \([k]\times P_{n,s} \), where \(P_{n,s}\) is a two-row Ferrers shape. We establish real stability for a family of bivariate polynomials associated with adjacent shapes, then apply the Chudnovsky-Seymour compatibility criterion to obtain a common interleaver. As noted in \cite{DD}, Conjecture 4.2 also implies Conjecture 4.3.
\end{abstract}

\section{Introduction}
Let $P$ be a finite poset.  An \emph{antichain} of $P$ is a subset whose elements are pairwise incomparable, and its antichain generating polynomial is
\[
  \N_P(x)=\sum_{A\in\Ant(P)}x^{|A|}.
\]
where \(\Ant(P)\) denotes the set of antichains of \(P\). For each ideal \(I\) of a finite poset \(Q\), inspired by the study of minuscule representations and Panyushev conjectures \cite{DW}, Ding and Dong \cite{DD} introduced a transfer polynomial $\N_I^k(x)$. These polynomials encode antichains in $[k]\times Q$ by increasing chains of ideals. Summing over all terminal ideals I gives $\N_{[k]\times Q}(x)$. Throughout we take $k\geq1$, since $\N_I^k(x)$ is not defined for $k=0$.

For $Q=[2]\times[n]$, Ding and Dong observed interlacing relations among the polynomials $\N_I^k(x)$.  Their computations led to three conjectures in Section~4 of~\cite{DD}. Conjecture~4.2 predicts common interleavers for two natural boundary sums of the transfer polynomials, Conjecture~4.3 asserts that $\N_{[2]\times[n]\times[k]}(x)$ is real-rooted for all positive $n,k$, and Conjecture~4.5 asserts that $\N_{[2]\times[n]\times[n+1]}(x)$ is palindromic and real-rooted, hence $\gamma$-positive. Conjecture 4.2 was proposed as a route to Conjecture 4.3: setting the boundary parameter to $n-1$ and applying the Chudnovsky-Seymour compatibility criterion proves real-rootedness of the full antichain polynomial.

Recently, Jiang \cite{Jiang} proved Conjectures 4.3 and 4.5 through a stronger multivariate result. He showed that the layer-refined antichain polynomial of $[2]\times[n]\times[k]$ is real stable and that its diagonal specialization has simple, strictly negative zeros. He also proved strict positivity of the $\gamma$-coefficients for $[2]\times[n]\times[n+1]$. However. Conjecture 4.2 remained open, the purpose of this paper is to prove Conjecture 4.2.

We use the following convention for interlacing.  For nonzero real-rooted polynomials $p$ and $q$, we list the zeros in weakly decreasing order. We write $p\preceqint q$ if
\[
 \deg p\leq \deg q\leq \deg p+1
\]
and the two root lists alternate as
\[
 q_1\geq p_1\geq q_2\geq p_2\geq\cdots,
\]
including every root and allowing equality.  A nonzero real-rooted polynomial $h$ is a \emph{common interleaver} of $p$ and $q$ if $p\preceqint h$ and $q\preceqint h$.

Write the ideals of $Q=[2]\times[n]$ as
\begin{equation}\label{eq:Iab}
 I(a,b)=\bigl(\{1\}\times[b]\bigr)\cup\bigl(\{2\}\times[a]\bigr),
 \qquad 0\leq a\leq b\leq n.
\end{equation}
For these ideals, write $\N^k_{(a,b)}(x)=\N^k_{I(a,b)}(x)$. Our main result is the following.

\begin{theorem}\label{thm:main}
Let $n,k\geq1$ and $0\leq s\leq n-1$.  Then the two polynomials
\begin{equation}\label{eq:L-R}
 \mathcal{L}^k_{n,s}(x)=\sum_{a=0}^{s}\sum_{b=a}^{n}\N^k_{(a,b)}(x),
 \qquad
 \mathcal{R}^k_{n,s}(x)=\sum_{b=s+1}^{n}\N^k_{(s+1,b)}(x)
\end{equation}
have a common interleaver.
\end{theorem}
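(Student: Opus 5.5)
The plan is to use the Chudnovsky--Seymour compatibility criterion. Two real-rooted polynomials $p,q$ with positive leading coefficients have a common interleaver if and only if $\lambda p+\mu q$ is real-rooted for all $\lambda,\mu\geq0$. For the bivariate version: if $F(x,y)=p(x)+y\,q(x)$ is real stable in $(x,y)$, then $p+\mu q$ is real-rooted for every $\mu\geq0$, since specializing $y=\mu$ preserves real-rootedness. This gives compatibility, hence a common interleaver. So the goal is to show that
\[
F^k_{n,s}(x,y)=\mathcal{L}^k_{n,s}(x)+y\,\mathcal{R}^k_{n,s}(x)
\]
is real stable for all $n,k\geq1$ and $0\leq s\leq n-1$. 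The degenerate cases, where one of the two sums is constant or has low degree, are checked directly.

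The key interpretation is combinatorial. $\mathcal{L}^k_{n,s}$ sums $\N^k_I$ over the ideals $I(a,b)$ with $a\leq s$. These are exactly the ideals contained in the two-row Ferrers shape $P_{n,s}$, with first row of length $n$ and second row of length $s$. Hence $\mathcal{L}^k_{n,s}=\N_{[k]\times P_{n,s}}$. Similarly, $\mathcal{R}^k_{n,s}$ collects the terminal ideals that use the cell $(2,s+1)$. It is therefore the difference $\N_{[k]\times P_{n,s+1}}-\N_{[k]\times P_{n,s}}$, i.e.\ the contribution of antichains whose top layer touches the new cell. Consequently
\[
F=(1-y)\N_{[k]\times P_{n,s}}+y\,\N_{[k]\times P_{n,s+1}}
\]
is a family interpolating between the adjacent shapes, and its real stability is the content of the "adjacent shapes" statement.

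To prove real stability I will follow Jiang's method. First, introduce layer variables $x_1,\dots,x_k$ (one per copy of $Q$ in $[k]\times Q$) and write the refined polynomial as an iterated transfer. Passing from layer $j$ to layer $j+1$ sends the current ideal $I(a,b)$ to an ideal $I(a',b')\supseteq I(a,b)$, and contributes $x_j$ raised to the number of new antichain elements (at most $2$, one per row). Second, encode the two-row ideal $(a,b)$ via a generating variable, so that the one-step transfer becomes a linear operator on polynomials in auxiliary variables $(u,v)$ of multidegree bounded by $(s,n)$ or $(s+1,n)$. Third, show that this operator preserves real stability. I would verify this with the Borcea--Brändén symbol criterion: the operator is a composition of "partial sums / prefix" maps and multiplication by $(1+x_j\cdot)$-type factors, and such operators preserve stability. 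Fourth, the terminal summation together with the marked cell $(2,s+1)$ is an evaluation and a single-coordinate extraction, producing $y$. This can be realized as a stability-preserving contraction, e.g.\ as the polarization of the $u$-degree at the boundary $s+1$. Finally, diagonalize $x_j=x$, which preserves real stability.

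The main obstacle is the third and fourth steps. Jiang's argument handles the rectangle $[2]\times[n]$. Here the second row is truncated at $s$, and the $y$-weighted extra cell breaks the symmetry of the rectangle. I will first try to realize the truncation as a Hadamard-type multiplier, multiplying the coefficient of $u^a$ by $[a\leq s]+y[a=s+1]$. The multiplier sequence $1,\dots,1,y,0,\dots$ corresponds to the symbol $\sum_{a\leq s}u^a+y\,u^{s+1}$ in a form to which Borcea--Brändén applies. If that is awkward, the fallback is a direct induction on $k$ via Pick functions. In that approach I would show that $\mathcal{R}/\mathcal{L}$ maps the upper half-plane to itself at each stage, using Jacobi-polynomial-type explicit forms for the one-layer case $k=1$ as the base.
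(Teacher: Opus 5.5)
Your reduction to the Chudnovsky--Seymour criterion is right, and so is your identification $\mathcal{L}^k_{n,s}=f^k_{n,s}$, $\mathcal{R}^k_{n,s}=f^k_{n,s+1}-f^k_{n,s}$. \textbf{The gap is that the bivariate statement you aim to prove is false, so the plan cannot be completed as stated.} Real stability of $p(x)+y\,q(x)$ is much stronger than compatibility. Specializing $y$ to any real $\mu$, not just $\mu\geq0$, must give a real-rooted polynomial. By the Hermite--Kakeya--Obreschkoff theorem (in its Borcea--Br\"and\'en form), the condition is equivalent to $p$ and $q$ having interlacing zeros with a prescribed orientation.

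The orientation is already wrong for every $n,k,s$. Since $\mathcal{L}(0)=1$, $\mathcal{R}(0)=0$ and $[x]\mathcal{R}=k>0$, take $x=i\varepsilon$ with $\varepsilon>0$ small. Then $y=-\mathcal{L}(x)/\mathcal{R}(x)=i/(k\varepsilon)+O(1)$ lies in $\HH$, so $\mathcal{L}(x)+y\,\mathcal{R}(x)$ has a zero in $\HH^2$. Your fallback, that $\mathcal{R}/\mathcal{L}$ is a Pick function, amounts to real stability of the reversed form $\mathcal{R}(x)+y\,\mathcal{L}(x)$. So your main claim and your fallback contradict each other.

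The reversed form fails as well, because $\mathcal{L}$ and $\mathcal{R}$ need not interlace. For $k=1$ and $s=n-1\geq1$, which is the case used for Conjecture~4.3, one has $\mathcal{R}=x$, while $\mathcal{L}=1+(2n-1)x+\tfrac{n(n-1)}{2}x^2$ has both zeros negative. For $n=2$ this gives $\mathcal{L}-3\mathcal{R}=1+x^2$. A common interleaver exists there ($x(x+1)$ works), but interlacing does not. Consistently with this, the operator steps in your plan cannot all be stability preservers. For instance, the truncation $u^a\mapsto[a\leq s]u^a$ is not one: for $s=2$ it sends $(1+u)^3$ to $1+3u+3u^2$, which has nonreal zeros.

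What you actually need is only that $\mathcal{L}+\lambda\mathcal{R}$ be real-rooted for each real $\lambda\geq0$. So the pencil parameter must stay a nonnegative real number, and the auxiliary variables must go elsewhere. The paper refines by the two rows rather than by the $k$ layers. For each fixed $\lambda\geq0$ it proves that $\Phi_\lambda(u,v)=\mathcal{F}^k_{n,s}+\lambda(\mathcal{F}^k_{n,s+1}-\mathcal{F}^k_{n,s})$ is real stable in $(u,v)$, in four steps:
\begin{enumerate}
\item The reflection count (Lemma~\ref{lem:reflection}, Proposition~\ref{prop:bivar}) is rewritten as a product minus a rank-one correction (Corollary~\ref{cor:rankone}).
\item Combined with the adjacent-shape identity \eqref{eq:adjacent}, this gives $\Phi_\lambda=A(u)C_\lambda(v)-\eta\,(TA)(u)N_\lambda(v)$, with $A=H_{n,k}$ and $\eta>0$.
\item Both $TA/A$ and $N_\lambda/C_\lambda$ are Pick functions (Lemma~\ref{lem:pick}), which comes from the Jacobi zeros of $Q_{a,k}$ and positive residues.
\item Hence $1-\eta\,\frac{(TA)(u)}{A(u)}\frac{N_\lambda(v)}{C_\lambda(v)}$ never vanishes on $\HH^2$, because a product of two points of $\HH$ is never a positive real number.
\end{enumerate}
Setting $u=v=x$ gives real-rootedness of $\mathcal{L}+\lambda\mathcal{R}$ for $\lambda\geq0$. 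The limit $\lambda\to\infty$ gives real-rootedness of $\mathcal{R}$, and Chudnovsky--Seymour then applies. The truncation of the second row is never implemented as an operator; it enters only through the explicit dependence of the closed formula on $s$.
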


Our proof extends Jiang's method \cite{Jiang} to the products \([k]\times P_{n,s} \) and the differences associated with adjacent shapes. We first obtain an enumeration formula by reflection. We then use Jacobi polynomials and positive residues to prove real stability of the resulting family. The Chudnovsky-Seymour compatibility criterion gives the required common interleaver. Taking \(s=n-1\) recovers the implication from Conjecture 4.2 to Conjecture 4.3 noted in \cite{DD}.

Section \ref{sec:enumeration} relates the boundary sums to Ferrers shapes and derives the enumeration formula. Section \ref{sec:proof} proves real stability and completes the proof of Theorem \ref{thm:main}.

\section{Ferrers shapes and antichain enumeration}\label{sec:enumeration}

For a nonnegative integer $m$, let $[m]=\{1,\dots,m\}$, with $[0]=\varnothing$.  Products of chains are ordered coordinatewise.  An \emph{ideal} $I$ of a poset $P$ is a subset such that $x\leq y$ and $y\in I$ imply $x\in I$.  We write $J(P)$ for the set of ideals of $P$ and $\max(I)$ for the set of maximal elements of $I$.

Let $Q$ be a finite poset and $k\geq1$. For each ideal $I$ of $Q$, the transfer polynomial introduced in \cite{DD} is
\begin{equation}\label{eq:transfer}
 \N_I^k(x)=
 \sum_{I_1\subseteq\cdots\subseteq I_{k-1}\subseteq I_k=I}
 x^{\sum_{j=1}^{k}|\max(I_j)\setminus I_{j-1}|},
 \qquad I_0=\varnothing,
\end{equation}
where all $I_j$ are ideals of $Q$.  Then
\begin{equation}\label{eq:sum-transfer}
  \N_{[k]\times Q}(x)=\sum_{I\in J(Q)}\N_I^k(x).
\end{equation}
This is the transfer decomposition used throughout~\cite{DD}.

We now specialize to $Q=[2]\times[n]$.  For $0\leq s\leq n$, define the two-row Ferrers ideal
\begin{equation}\label{eq:Pns}
 P_{n,s}=\bigl(\{1\}\times[n]\bigr)\cup\bigl(\{2\}\times[s]\bigr)\subseteq Q.
\end{equation}
We distinguish the two rows in the antichain polynomial of $[k]\times P_{n,s}$ by setting
\begin{equation}\label{eq:F-def}
 \mathcal{F}^k_{n,s}(u,v)=
 \sum_{A\in\Ant([k]\times P_{n,s})}
 u^{|A\cap([k]\times(\{1\}\times[n]))|}
 v^{|A\cap([k]\times(\{2\}\times[s]))|},
\end{equation}
and set
\begin{equation}\label{eq:f-def}
 f^k_{n,s}(x)=\mathcal{F}^k_{n,s}(x,x).
\end{equation}
Jiang \cite{Jiang} treated the rectangular case \(s=n\). We consider all \(0\leq s\le n\) and begin by expressing the boundary sums in terms of adjacent Ferrers shapes.

\begin{proposition}\label{prop:endpoint}
For $n,k\geq1$ and $0\leq s\leq n-1$,
\begin{equation}\label{eq:endpoint}
 \mathcal{L}^k_{n,s}(x)=f^k_{n,s}(x),
 \qquad
 \mathcal{R}^k_{n,s}(x)=f^k_{n,s+1}(x)-f^k_{n,s}(x).
\end{equation}
\end{proposition}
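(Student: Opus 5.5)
The plan is to apply the transfer decomposition \eqref{eq:sum-transfer} with $Q$ replaced by the subposet $P_{n,s}$, and then compare the transfer polynomials of $P_{n,s}$ with those of $[2]\times[n]$.

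\textbf{Step 1: ideals of $P_{n,s}$.} Because $P_{n,s}$ is an ideal of $Q=[2]\times[n]$, its ideals are exactly the ideals of $Q$ contained in $P_{n,s}$. These are the $I(a,b)$ of \eqref{eq:Iab} with $0\le a\le s$ and $a\le b\le n$.

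\textbf{Step 2: transfer polynomials agree.} Fix such an ideal $I$. In \eqref{eq:transfer}, every chain $I_1\subseteq\cdots\subseteq I_k=I$ consists of ideals contained in $I\subseteq P_{n,s}$. So the chains are the same whether we view them in $Q$ or in $P_{n,s}$. The exponent $|\max(I_j)\setminus I_{j-1}|$ depends only on the sets $I_j$ and $I_{j-1}$ and the induced order, which is the same in both posets. Hence the transfer polynomial of $I$ computed in $P_{n,s}$ equals $\N^k_{(a,b)}(x)$.

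\textbf{Step 3: the formula for $\mathcal{L}$.} Apply \eqref{eq:sum-transfer} with $Q$ replaced by $P_{n,s}$. The right side becomes $\sum_{a=0}^{s}\sum_{b=a}^{n}\N^k_{(a,b)}(x)=\mathcal{L}^k_{n,s}(x)$. The left side is $\N_{[k]\times P_{n,s}}(x)$. Setting $u=v=x$ in \eqref{eq:F-def} gives $\N_{[k]\times P_{n,s}}(x)=f^k_{n,s}(x)$, so the first identity follows.

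\textbf{Step 4: the formula for $\mathcal{R}$.} The same argument applies to $P_{n,s+1}$, which is allowed since $s+1\le n$. It gives $f^k_{n,s+1}=\sum_{a=0}^{s+1}\sum_{b=a}^{n}\N^k_{(a,b)}$. Subtracting the identity from Step 3 leaves only the terms with $a=s+1$, namely $\sum_{b=s+1}^{n}\N^k_{(s+1,b)}=\mathcal{R}^k_{n,s}$.

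\textbf{Main obstacle.} Equation \eqref{eq:sum-transfer} is quoted from \cite{DD} for an arbitrary finite poset $Q$, so invoking it for $P_{n,s}$ is legitimate. If one wants to avoid relying on that, the identity can be checked directly with the bijection underlying it. An antichain $A$ of $[k]\times P$ corresponds to its down-closure, an ideal of $[k]\times P$. That ideal is the same as a weakly increasing chain of ideals $I_1\supseteq\cdots$ of $P$ under the level-by-level slicing, reindexed so that it reads $I_1\subseteq\cdots\subseteq I_k$. Under this correspondence, the elements of $A$ at level $j$ are the maximal elements of $I_j$ that do not lie in $I_{j-1}$. The only point needing care is to fix the orientation convention so that it matches \eqref{eq:transfer}.
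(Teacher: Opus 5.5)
Your proof is correct. It rests on the same bijection the paper uses, between chains $I_1\subseteq\cdots\subseteq I_k$ of ideals and ideals $J$ of $[k]\times Q$, with $|\max(J)|$ equal to the transfer weight. You organize the argument differently.

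\textbf{First identity.} The paper re-derives the bijection inside $[k]\times Q$. It then observes that the endpoint condition $a\le s$ is equivalent to $J\subseteq[k]\times P_{n,s}$. You instead take \eqref{eq:sum-transfer} as valid for an arbitrary finite poset, which is how the paper states it. You add the observation that $\N^k_I$ depends only on $I$ with its induced order, not on the ambient poset, and apply the decomposition to $P_{n,s}$.

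\textbf{Second identity.} The paper argues that $[k]\times P_{n,s}$ is obtained by deleting the upper set $E_s$. Hence $f^k_{n,s+1}-f^k_{n,s}$ counts ideals meeting $E_s$, which are exactly the chains whose terminal ideal contains $(2,s+1)$. You simply apply the same decomposition to $P_{n,s+1}$, which is legitimate since $s+1\le n$, and subtract. This avoids the upper-set argument entirely.

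\textbf{What each route buys.} Your route is shorter and more modular. The paper's route is self-contained, since it does not lean on \cite{DD} for \eqref{eq:sum-transfer}. It also records explicitly that $f^k_{n,s+1}-f^k_{n,s}$ is the generating polynomial of antichains meeting $E_s$, which the proof of Theorem~\ref{thm:main} later uses to get $[x]g=k$. That interpretation also follows immediately in your setting from $[k]\times P_{n,s}=([k]\times P_{n,s+1})\setminus E_s$.
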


\begin{proof}
We use the standard bijection between increasing chains of ideals of $Q$ and
ideals of $[k]\times Q$. For a chain
\[
 I_1\subseteq\cdots\subseteq I_k,
\]
define
\begin{equation}\label{eq:J-chain}
 J=\{(p,q)\in[k]\times Q:q\in I_{k+1-p}\}.
\end{equation}
If $(p',q')\leq(p,q)\in J$, then $p'\leq p$ and $q'\leq q$. Set $j=k+1-p$ and $j'=k+1-p'$, then $j'\geq j$.  Since $I_j$ is an ideal and $I_j\subseteq I_{j'}$, it follows that $q'\in I_{j'}$, so $(p',q')\in J$. Thus $J$ is an ideal. Conversely, from an ideal $J\subseteq[k]\times Q$ define
\[
 I_j=\{q\in Q:(k+1-j,q)\in J\}.
\]
Then each $I_j$ is an ideal and $I_j\subseteq I_{j+1}$. These constructions are mutually inverse.

Under this correspondence,
\begin{equation}\label{eq:maxJ}
 \max(J)=
 \{(k+1-j,q):1\leq j\leq k,\ q\in\max(I_j)\setminus I_{j-1}\}.
\end{equation}
Indeed, if $q\in\max(I_j)\setminus I_{j-1}$ and $(p',q')\in J$ lies above $(k+1-j,q)$, then $j'=k+1-p'\leq j$ and $q'\in I_{j'}\subseteq I_j$.  Maximality forces $q'=q$, and $p'>k+1-j$ would imply $q\in I_{j-1}$, a contradiction.  The converse follows similarly: if $(p,q)$ is maximal in $J$ and $j=k+1-p$, then $q$ must be maximal in $I_j$ and cannot belong to $I_{j-1}$.  Consequently,
\begin{equation}\label{eq:max-card}
 |\max(J)|=\sum_{j=1}^{k}|\max(I_j)\setminus I_{j-1}|.
\end{equation}

Now sum~\eqref{eq:transfer} over all endpoints $I_k=I(a,b)$ with $a\leq s$.  Since the chain is increasing, this endpoint condition is equivalent to $I_j\subseteq P_{n,s}$ for every $j$, hence by~\eqref{eq:J-chain} to $J\subseteq[k]\times P_{n,s}$. The bijection $J\mapsto\max(J)$ between ideals and antichains preserves the weight by (11). Hence $\mathcal{L}^k_{n,s}=f^k_{n,s}$.

For the second identity put
\[
 E_s=\{(p,(2,s+1)):p\in[k]\}.
\]
The antichains of $[k]\times P_{n,s}$ are exactly those antichains of $[k]\times P_{n,s+1}$ that avoid $E_s$.  Under~\eqref{eq:J-chain}, an ideal $J\subseteq[k]\times P_{n,s+1}$ meets $E_s$ if and only if its terminal ideal $I_k$ contains $(2,s+1)$, i.e. if and only if $I_k=I(s+1,b)$ for some $b\geq s+1$. Since $E_s$ is an upper set of $[k]\times P_{n,s+1}$, we also have
\[
 J\cap E_s\neq\varnothing
 \quad\Longleftrightarrow\quad
 \max(J)\cap E_s\neq\varnothing.
\]
Thus $f^k_{n,s+1}-f^k_{n,s}$ is the generating polynomial for chains ending at $I(s+1, b)$ with $b \geq s + 1$, and equals $\mathcal{R}^k_{n,s}$.
\end{proof}

To compute \eqref{eq:F-def}, we encode the elements of an antichain in the two rows by four increasing sequences. We use the convention
\[
 \binom a b=0\qquad\text{if }b<0\text{ or }b>a,
\]
for all nonnegative upper arguments occurring below. 

For a finite set $E$, $\binom E r$ denotes the set of its $r$-element subsets. The next lemma is the ordinary-weight specialization of the mixed-turn reflection of Krattenthaler and Sulanke~\cite{KS}. We give the map and its inverse explicitly, keeping track of the four endpoint shifts needed for unequal row lengths.

We assume $b, c\geq 1$ in the lemma so that the upper arguments $b-1$ and $c-1$
in (13) are nonnegative. The following lemma recovers the corresponding result in \cite{Jiang}.

\begin{lemma}\label{lem:reflection}
Let $a, d, i, j$ be nonnegative integers and let $b, c$ be positive integers
such that
\[
 a\leq c,\qquad d\leq b,
 \qquad 0\leq i\leq\min(a,b),
 \qquad 0\leq j\leq\min(c,d).
\]
Consider quadruples of strictly increasing sequences
\[
 X=(x_1<\cdots<x_i)\subseteq[a],\quad
 Y=(y_1<\cdots<y_i)\subseteq[b],
\]
\[
 U=(u_1<\cdots<u_j)\subseteq[c],\quad
 V=(v_1<\cdots<v_j)\subseteq[d].
\]
The number of such quadruples for which there are no $h\in[i]$ and $\ell\in[j]$ satisfying
\begin{equation}\label{eq:crossing}
 x_h\geq u_\ell,\qquad y_h\leq v_\ell
\end{equation}
is
\begin{equation}\label{eq:reflection-count}
 \binom ai\binom bi\binom cj\binom dj
 -
 \binom{a+1}{i+1}\binom{b-1}{i-1}
 \binom{c-1}{j-1}\binom{d+1}{j+1}.
\end{equation}
\end{lemma}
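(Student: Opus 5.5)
The count is total minus bad. The total number of quadruples is \(\binom ai\binom bi\binom cj\binom dj\). So it suffices to construct a bijection between the set \(\mathcal{B}\) of "bad" quadruples (those with at least one pair \((h,\ell)\) satisfying \eqref{eq:crossing}) and the set \(\mathcal{T}\) of quadruples
\[
 X'\in\tbinom{[a+1]}{i+1},\quad Y'\in\tbinom{[b-1]}{i-1},\quad U'\in\tbinom{[c-1]}{j-1},\quad V'\in\tbinom{[d+1]}{j+1}.
\]
Pairing \(X,Y\) and \(U,V\) is the same as giving two families of lattice paths, or two "zigzag" chains: \(x_1<\dots<x_i\) paired with \(y_1<\dots<y_i\) and likewise for \(u,v\). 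Condition \eqref{eq:crossing} says that the point \((x_h,y_h)\) lies weakly southeast of \((u_\ell,v_\ell)\) in a suitable orientation. So the bad event is an "intersection" of the two chains, and I will apply a Lindström--Gessel--Viennot style tail swap.

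\textbf{Step 1 (choose a canonical crossing).} For a bad quadruple, choose the crossing pair canonically. Take the smallest \(h\) for which some \(\ell\) satisfies \eqref{eq:crossing}, and then the largest such \(\ell\) (or the extremal choice that makes the swap involutive). Monotonicity of the sequences should make the set of crossing pairs an "order-convex" region, so this choice is stable under the swap.

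\textbf{Step 2 (the swap).} Exchange tails with an index shift. Set
\[
 X'=(x_1,\dots,x_{h},\;u_{\ell}+?\,,\dots),\qquad U'=(u_1,\dots,u_{\ell-1},x_{h+1},\dots)
\]
and similarly for \(Y\) and \(V\). Concretely, the new \(X'\) takes \(x_1<\dots<x_h\) followed by \(u_\ell<\dots<u_j\) suitably shifted, and the new \(U'\) takes \(u_1,\dots,u_{\ell-1}\) followed by \(x_{h+1},\dots\). The inequality \(x_h\ge u_\ell\) is what prevents a plain concatenation from being increasing. The fix, as in Krattenthaler--Sulanke's mixed turns, is to move one element: \(X'\) gains an element (size \(i+1\)) and lives in \([a+1]\), while \(U'\) loses one (size \(j-1\)) and lives in \([c-1]\). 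Dually, \(Y\) loses one element and \(V\) gains one, with the ranges \([b-1]\) and \([d+1]\) coming from the inequality \(y_h\le v_\ell\). The hypotheses \(a\le c\) and \(d\le b\) are exactly what guarantee that the transplanted tails fit into the new ranges. For example, tails of \(U\) are at most \(c\), but after the \(+1\) shift they must be at most \(a+1\). This requires the right choice of which segments are swapped, so I will set the shifts so that \(u\)-tail entries enter \(X'\) as entries bounded by \(a+1\) using \(x_h\ge u_\ell\), and \(x\)-tail entries enter \(U'\) bounded by \(c-1\).

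\textbf{Step 3 (inverse map).} Every element of \(\mathcal{T}\) is forced to cross, since the size mismatch forces some index pair to satisfy the crossing inequalities by pigeonhole. Apply the same canonical choice and the reverse swap, and check that the two maps are mutually inverse because the canonical index \((h,\ell)\) is preserved.

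The main obstacle is Step 2: pinning down exact index shifts so that monotonicity, the range bounds \([a+1],[b-1],[c-1],[d+1]\), and preservation of the canonical crossing all hold simultaneously. I would first work out the case \(a=c\), \(b=d\), where the result should reduce to Jiang's lemma, and copy his shift conventions. Then I would check, one endpoint at a time, that the inequalities \(a\le c\) and \(d\le b\) suffice for each boundary condition. If that direct approach is messy, the fallback is to encode each pair \((X,Y)\) as a lattice path with marked turns and invoke the Krattenthaler--Sulanke reflection directly.
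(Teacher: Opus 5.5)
Your overall frame, total minus bad with a bijection from the bad quadruples onto $\mathcal T$ (the paper's $\mathcal C$), matches the paper. However, the bijection is the entire content of the lemma, and you never construct it. The mechanism you propose also cannot produce it.

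\textbf{Why the tail swap fails.} If you swap tails between the two families at a crossing pair $(h,\ell)$, then $|X'|=h+(j-\ell+1)$ and $|U'|=(\ell-1)+(i-h)$. These equal $i+1$ and $j-1$ only when $i-h=j-\ell$, and no shift of values repairs a wrong cardinality. So the image does not lie in $\mathcal T$. The ranges fail as well. Entries of $U$ go up to $c\ge a$, and $x_h\ge u_\ell$ bounds only $u_\ell$, not $u_{\ell+1},\dots,u_j$. Hence a transplanted $U$-tail need not fit in $[a+1]$: the hypothesis $a\le c$ is an obstruction here, not the guarantee you claim.

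\textbf{Step 3 is unsupported.} Elements of $\mathcal T$ have no reason to satisfy \eqref{eq:crossing}. ``Pigeonhole'' also does not single out a canonical pair from which an inverse could start.

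\textbf{The missing idea is a reflection inside each sequence, not an exchange between families.}
\begin{itemize}
\item Let $I$ be the least $h$ and $J$ the least $\ell$ occurring in crossing pairs, chosen independently. Monotonicity shows $(I,J)$ itself crosses: if $(I,\ell_0)$ and $(h_0,J)$ cross, then $u_J\le u_{\ell_0}\le x_I$ and $y_I\le y_{h_0}\le v_J$.
\item Minimality then gives $x_h<\alpha:=u_J$ for $h<I$ and $v_\ell<\beta:=y_I$ for $\ell<J$. Your rule ``smallest $h$, then largest $\ell$'' would lose the second inequality.
\item Each sequence keeps its own prefix, reflected about $\alpha$ (for $X,U$) or $\beta$ (for $Y,V$), e.g.\ $x_h\mapsto\alpha-x_{I-h}$ and $v_\ell\mapsto\beta-v_{J-\ell}$. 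These entries are positive by the inequalities above.
\item The single entry $\alpha$ moves from $U$ into $X$, and the single entry $\beta$ moves from $Y$ into $V$.
\item The remaining tails are shifted by $+1$ in $X,V$ and by $-1$ in $Y,U$.
\end{itemize}
This gives exactly the sizes $i+1,\,i-1,\,j-1,\,j+1$ and the ranges $[a+1],[b-1],[c-1],[d+1]$, without using $a\le c$ or $d\le b$.

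Those hypotheses enter only in the inverse map. After appending auxiliary entries $y'_i=b$ and $u'_j=c$, the pair $(i,j)$ satisfies the reversed condition $x'_h\le u'_\ell$, $y'_h\ge v'_\ell$, because $x'_i\le a\le c$ and $v'_j\le d\le b$. So a least such pair $(I',J')$ exists, and it recovers $\alpha=x'_{I'}$ and $\beta=v'_{J'}$.
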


\begin{proof}
Let $\mathcal A$ be the set of all quadruples, and let $\mathcal I\subseteq\mathcal A$ consist of those satisfying \eqref{eq:crossing} for at least one pair $(h, \ell)$. Then
\[
 |\mathcal A|=\binom ai\binom bi\binom cj\binom dj.
\]
If $i=0$ or $j=0$, then $\mathcal I=\varnothing$ and the correction term in~\eqref{eq:reflection-count} vanishes.  Assume henceforth that $i,j\geq1$.

Let $\mathcal C$ be the set of quadruples $(X',Y',U',V')$ with
\[
 X'\in\binom{[a+1]}{i+1},\qquad
 Y'\in\binom{[b-1]}{i-1},\qquad
 U'\in\binom{[c-1]}{j-1},\qquad
 V'\in\binom{[d+1]}{j+1}.
\]
We construct mutually inverse maps $\Gamma:\mathcal I\to\mathcal C$ and $\Delta:\mathcal C\to\mathcal I$.

Take $(X,Y,U,V)\in\mathcal I$ and set
\[
 I=\min\{h:\text{\eqref{eq:crossing} holds for some }\ell\},\qquad
 J=\min\{\ell:\text{\eqref{eq:crossing} holds for some }h\}.
\]
Then $(I,J)$ itself satisfies~\eqref{eq:crossing}.  Indeed, if $(I,\ell_0)$ and $(h_0,J)$ satisfy \eqref{eq:crossing}, then $J\leq\ell_0$ and $I\leq h_0$, so
\[
 u_J\leq u_{\ell_0}\leq x_I,
 \qquad
 y_I\leq y_{h_0}\leq v_J.
\]
Put $\alpha=u_J$ and $\beta=y_I$. The choices of $I$ and $J$ imply
\[
 h<I\Longrightarrow x_h<\alpha,
 \qquad
 \ell<J\Longrightarrow v_\ell<\beta.
\]
For $1\leq h<I$ and $1\leq\ell<J$ define
\[
 \widehat x_h=\alpha-x_{I-h},\quad
 \widehat y_h=\beta-y_{I-h},\quad
 \widehat u_\ell=\alpha-u_{J-\ell},\quad
 \widehat v_\ell=\beta-v_{J-\ell}.
\]
These numbers are positive.  Define
\begin{align*}
 X'&=(\widehat x_1,\dots,\widehat x_{I-1},\alpha,
       x_I+1,\dots,x_i+1),\\
 Y'&=(\widehat y_1,\dots,\widehat y_{I-1},
       y_{I+1}-1,\dots,y_i-1),\\
 U'&=(\widehat u_1,\dots,\widehat u_{J-1},
       u_{J+1}-1,\dots,u_j-1),\\
 V'&=(\widehat v_1,\dots,\widehat v_{J-1},\beta,
       v_J+1,\dots,v_j+1).
\end{align*}
Empty blocks are omitted.  Reversal followed by subtraction from a constant preserves strict increase, and at the splices one has
\[
 \widehat x_{I-1}<\alpha<x_I+1,
\quad
 \widehat y_{I-1}\leq\beta-1<y_{I+1}-1,
\]
\[
 \widehat u_{J-1}\leq\alpha-1<u_{J+1}-1,
\quad
 \widehat v_{J-1}<\beta<v_J+1,
\]
whenever the displayed entries exist. For each nonempty output sequence, the
corresponding endpoint bound is
\[
 \max X'\leq a+1,
 \qquad \max Y'\leq b-1,
 \qquad \max U'\leq c-1,
 \qquad \max V'\leq d+1.
\]
Thus $(X',Y',U',V')\in\mathcal C$, so $\Gamma$ is well defined.

Conversely, take $(X',Y',U',V')\in\mathcal C$. Append the auxiliary terminal entries
\[
 y'_i=b,
 \qquad
 u'_j=c,
\]
and search only among pairs $(h,\ell)\in[i]\times[j]$.  Let $I'$ and $J'$ be the least first and second coordinates occurring among pairs satisfying
\begin{equation}\label{eq:inverse-crossing}
 x'_h\leq u'_\ell,
 \qquad
 y'_h\geq v'_\ell.
\end{equation}
Such a pair exists because $x'_i\leq a\leq c=u'_j$ and $y'_i=b\geq d\geq v'_j$.  As before, $(I',J')$ itself satisfies~\eqref{eq:inverse-crossing}.  Put
\[
 \alpha'=x'_{I'},\qquad \beta'=v'_{J'}.
\]
Minimality gives
\[
 h<I'\Longrightarrow y'_h<\beta',
 \qquad
 \ell<J'\Longrightarrow u'_\ell<\alpha'.
\]
For the relevant ranges set
\[
 \widetilde x_h=\alpha'-x'_{I'-h},\quad
 \widetilde y_h=\beta'-y'_{I'-h},
\]
\[
 \widetilde u_\ell=\alpha'-u'_{J'-\ell},\quad
 \widetilde v_\ell=\beta'-v'_{J'-\ell}.
\]
Now define
\begin{align*}
 X&=(\widetilde x_1,\dots,\widetilde x_{I'-1},
       x'_{I'+1}-1,\dots,x'_{i+1}-1),\\
 Y&=(\widetilde y_1,\dots,\widetilde y_{I'-1},\beta',
       y'_{I'}+1,\dots,y'_{i-1}+1),\\
 U&=(\widetilde u_1,\dots,\widetilde u_{J'-1},\alpha',
       u'_{J'}+1,\dots,u'_{j-1}+1),\\
 V&=(\widetilde v_1,\dots,\widetilde v_{J'-1},
       v'_{J'+1}-1,\dots,v'_{j+1}-1).
\end{align*}
The auxiliary entries are used only to locate $(I',J')$ and do not appear in the output.
The same inequalities at the junctions show that the resulting sequences are strictly
increasing and lie in $[a],[b],[c],[d]$, respectively. Moreover,
\[
 x_{I'}=x'_{I'+1}-1\geq x'_{I'}=\alpha'=u_{J'},
\qquad
 y_{I'}=\beta'\leq v'_{J'+1}-1=v_{J'},
\]
so the output belongs to $\mathcal I$.

We check that the maps are mutually inverse. For an element of $\mathcal I$, the pair $(I,J)$
satisfies \eqref{eq:inverse-crossing} after applying $\Gamma$. No pair satisfying \eqref{eq:inverse-crossing}
can have $h<I$ or $\ell<J$. If both inequalities hold, reflecting back gives a pair satisfying \eqref{eq:crossing} with smaller
indices, contrary to the choice of $I$ and $J$. If only one holds, one of the inequalities
in \eqref{eq:inverse-crossing} fails by minimality. The inverse map therefore recovers $I,J,\alpha,\beta$. Reflecting
each prefix a second time restores it, the tail shifts are undone, and the crossing
coordinates return to their original rows. The reverse composition is checked in the
same way, with the inequalities reversed. Empty prefixes and tails are omitted; the
entries $x'_{i+1}$ and $v'_{j+1}$ remain available when $I'=i$ and $J'=j$. Thus the formulas
also cover the endpoint cases.

Therefore $|\mathcal I|=|\mathcal C|$, i.e.
\[
 |\mathcal I|=
 \binom{a+1}{i+1}\binom{b-1}{i-1}
 \binom{c-1}{j-1}\binom{d+1}{j+1}.
\]
Subtracting from $|\mathcal A|$ proves~\eqref{eq:reflection-count}.
\end{proof}

\begin{proposition}\label{prop:bivar}
For $n\geq1$, $0\leq s\leq n$, and $k\geq1$,
\begin{align}
 \mathcal{F}^k_{n,s}(u,v)
 =\sum_{r,t\geq0}\Bigg[&
 \binom nr\binom kr\binom st\binom kt \notag\\
 &-\binom{n-1}{r-1}\binom{k+1}{r+1}
   \binom{s+1}{t+1}\binom{k-1}{t-1}
 \Bigg]u^rv^t.
 \label{eq:bivar}
\end{align}
\end{proposition}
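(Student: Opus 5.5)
The plan is to read \eqref{eq:bivar} coefficientwise: the coefficient of $u^rv^t$ should equal the count in Lemma~\ref{lem:reflection} with parameters
\[
 (a,b,c,d,i,j)=(k,n,k,s,r,t).
\]
So the proof consists of describing antichains of $[k]\times P_{n,s}$ with $r$ elements in row~1 and $t$ in row~2 as quadruples $(X,Y,U,V)$ avoiding \eqref{eq:crossing}. The hypotheses of the lemma hold: $a=k\le k=c$, $d=s\le n=b$, and $b=n\ge1$, $c=k\ge1$. The ranges $r\le\min(k,n)$ and $t\le\min(k,s)$ are automatic, since outside them both binomial products in \eqref{eq:bivar} vanish. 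This needs checking for the second product too. For example, if $r>k$ then $\binom{k+1}{r+1}=0$. If $t=0$ then $\binom{k-1}{-1}=0$. The cases $r=0$ and $t>s$ are similar.

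First I describe comparabilities in $[k]\times P_{n,s}$. Two elements $(p,(1,q))$ and $(p',(1,q'))$ in row~1 are incomparable iff $p<p'$ and $q>q'$ (or the reverse). Hence a row-1 antichain with $r$ elements is the same as a pair of $r$-subsets of $[k]$ and of $[n]$: list the first coordinates increasingly and the second coordinates decreasingly. Row~2 is the same with $[s]$ in place of $[n]$. Across rows, $(2,q')\le(1,q)$ never holds in $[2]\times[n]$. Therefore $(p,(1,q))$ and $(p',(2,q'))$ are comparable iff $p\le p'$ and $q\le q'$.

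Next I reverse the first coordinate by $p\mapsto k+1-p$. Within each row, the reversed first coordinates and the second coordinates then decrease together. Listing the row-1 elements so that both coordinates increase gives $X\in\binom{[k]}{r}$ and $Y\in\binom{[n]}{r}$, with the $h$-th element of the row equal to $(k+1-x_h,(1,y_h))$. Row~2 similarly gives $U\in\binom{[k]}{t}$ and $V\in\binom{[s]}{t}$. The cross-row comparability $p\le p'$, $q\le q'$ becomes $x_h\ge u_\ell$ and $y_h\le v_\ell$, which is exactly \eqref{eq:crossing}.

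This correspondence is a bijection between antichains with weight $u^rv^t$ and the admissible quadruples. Lemma~\ref{lem:reflection} then gives the coefficient
\[
\binom kr\binom nr\binom kt\binom st-\binom{k+1}{r+1}\binom{n-1}{r-1}\binom{k-1}{t-1}\binom{s+1}{t+1},
\]
which is \eqref{eq:bivar}. The only delicate point is orienting the coordinates so that the $h$-th entries of $X$ and $Y$ belong to the same poset element, and so that the forbidden comparability matches \eqref{eq:crossing} rather than its mirror image. The reversal of the chain $[k]$ handles this. The degenerate case $s=0$ is covered because the lemma allows $d=0$.
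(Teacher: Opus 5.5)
Your proposal is correct and follows the paper's proof essentially step for step. You reverse the $[k]$-coordinate so that each row's antichain becomes a pair of increasing sequences, and you observe that the only possible cross-row comparability is a long-row element lying below a short-row element. That condition is exactly \eqref{eq:crossing}, and you then apply Lemma~\ref{lem:reflection} with $(a,b,c,d,i,j)=(k,n,k,s,r,t)$, handling out-of-range $(r,t)$ by the binomial convention just as the paper does.
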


\begin{proof}
Suppose an antichain has $r$ elements in the long row $[k]\times(\{1\}\times[n])$ and $t$ elements in the short row $[k]\times(\{2\}\times[s])$.  Its long-row part can be written uniquely as
\[
 (b_{r+1-i},(1,a_i)),\qquad 1\leq i\leq r,
\]
where
\[
 1\leq a_1<\cdots<a_r\leq n,
 \qquad
 1\leq b_1<\cdots<b_r\leq k.
\]
Similarly the short-row part is
\[
 (d_{t+1-j},(2,c_j)),\qquad 1\leq j\leq t,
\]
with $C=\{c_1<\cdots<c_t\}\subseteq[s]$ and $D=\{d_1<\cdots<d_t\}\subseteq[k]$.  Set
\[
 \pi_i=k+1-b_{r+1-i},
 \qquad
 \rho_j=k+1-d_{t+1-j}.
\]
Then both $(\pi_i)$ and $(\rho_j)$ are strictly increasing in $[k]$. Comparability between the
rows is possible only when an element of the long row lies below an element of the
short row. This occurs precisely when
\[
 \pi_i\geq\rho_j,
 \qquad
 a_i\leq c_j.
\]
Thus the union is an antichain exactly when there is no such pair.  Apply Lemma~\ref{lem:reflection} with
\[
 (a,b,c,d,i,j)=(k,n,k,s,r,t),
 \qquad
 (X,Y,U,V)=(\pi,A,\rho,C).
\]
Here $a = c = k$, $d = s \leq n = b$, and $b, c \geq 1$, so the hypotheses of Lemma \ref{lem:reflection} hold
whenever $0 \leq r \leq min(n, k)$ and $0 \leq t \leq min(s, k)$. This gives the coefficient of $u^rv^t$ in \eqref{eq:bivar}, 
including $r = 0$ or $t = 0$. Outside these ranges no antichain with the
prescribed row sizes exists, and both terms in the coefficient formula vanish by the
binomial convention.
\end{proof}

We rewrite the coefficient formula as a product minus a rank-one correction. For this purpose, define
\begin{equation}\label{eq:HJT}
 H_{a,k}(x)=\sum_{q\geq0}\binom aq\binom kq x^q,
 \qquad
 (Jp)(x)=\frac1x\int_0^x p(w)\dd w,
 \qquad
 Tp=p-Jp.
\end{equation}
The singularity in $Jp$ at $x=0$ is removable.  If $p(x)=\sum_{q\geq0}p_qx^q$, then
\begin{equation}\label{eq:J-coeff}
 Jp(x)=\sum_{q\geq0}\frac{p_q}{q+1}x^q,
 \qquad
 Tp(x)=\sum_{q\geq0}\frac{q}{q+1}p_qx^q.
\end{equation}

\begin{corollary}\label{cor:rankone}
For $n\geq1$, $0\leq s\leq n$, and $k\geq1$,
\begin{equation}\label{eq:rankone}
 \mathcal{F}^k_{n,s}(u,v)
 =H_{n,k}(u)H_{s,k}(v)
 -\frac{(k+1)(s+1)}{nk}
 (TH_{n,k})(u)(TH_{s,k})(v).
\end{equation}
\end{corollary}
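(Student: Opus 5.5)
The plan is to compare coefficients of $u^rv^t$ on both sides, using Proposition~\ref{prop:bivar} for the left-hand side. The product term $H_{n,k}(u)H_{s,k}(v)$ has $u^rv^t$-coefficient $\binom nr\binom kr\binom st\binom kt$. This is exactly the first term in \eqref{eq:bivar}. So the task reduces to identifying the correction term. By \eqref{eq:J-coeff}, $(TH_{a,k})(x)=\sum_q \frac{q}{q+1}\binom aq\binom kq x^q$. Hence the coefficient of $u^rv^t$ in the correction is
\[
 \frac{(k+1)(s+1)}{nk}\cdot\frac{r}{r+1}\binom nr\binom kr\cdot\frac{t}{t+1}\binom st\binom kt .
\]

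I would then rewrite each factor with the absorption identities $\binom{m}{q}=\frac{m}{q}\binom{m-1}{q-1}$ and $\binom{m+1}{q+1}=\frac{m+1}{q+1}\binom mq$. For the $u$-part:
\[
 \frac{r}{r+1}\binom nr\binom kr=\frac{r}{r+1}\cdot\frac nr\binom{n-1}{r-1}\cdot\frac{r+1}{k+1}\binom{k+1}{r+1}=\frac{n}{k+1}\binom{n-1}{r-1}\binom{k+1}{r+1}.
\]
For the $v$-part:
\[
 \frac{t}{t+1}\binom st\binom kt=\frac{t+1}{s+1}\binom{s+1}{t+1}\cdot\frac{t}{t+1}\cdot\frac kt\binom{k-1}{t-1}=\frac{k}{s+1}\binom{s+1}{t+1}\binom{k-1}{t-1}.
\]
Multiplying these, the prefactors $\frac{n}{k+1}\cdot\frac{k}{s+1}$ cancel exactly against $\frac{(k+1)(s+1)}{nk}$. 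What remains is the subtracted term of \eqref{eq:bivar}, which proves \eqref{eq:rankone}.

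The only delicate point is the boundary cases, where the absorption identities would divide by zero or leave the binomial convention's range. When $r=0$ the factor $r/(r+1)$ vanishes, and so does $\binom{n-1}{-1}$. Likewise $t=0$ kills both sides through $\binom{k-1}{-1}=0$. When $s=0$, only $t=0$ survives, since $\binom 0t=0$ for $t>0$ and $\binom{1}{t+1}=0$ for $t\geq1$. So I would state the identities for $r,t\geq1$ and treat $r=0$ or $t=0$ separately. The hypotheses $n,k\geq1$ keep the prefactor $\frac{(k+1)(s+1)}{nk}$ well defined and all upper binomial arguments nonnegative. The identities $\binom mq=\frac mq\binom{m-1}{q-1}$ remain valid with the convention when $q>m$, since both sides are then zero. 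I expect no genuine obstacle; this bookkeeping is the only thing to check.
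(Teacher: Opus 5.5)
Your proposal is correct and follows the paper's proof: you compare coefficients of $u^rv^t$ using Proposition~\ref{prop:bivar} and \eqref{eq:J-coeff}, and your two absorption computations are exactly the paper's identities $\binom{n-1}{r-1}\binom{k+1}{r+1}=\frac{k+1}{n}\frac{r}{r+1}\binom nr\binom kr$ and $\binom{s+1}{t+1}\binom{k-1}{t-1}=\frac{s+1}{k}\frac{t}{t+1}\binom st\binom kt$. Your explicit check of the boundary cases $r=0$, $t=0$ and $s=0$ spells out what the paper covers by asserting these identities for every $r,t\geq0$.
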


\begin{proof}
For every $r,t\geq0$,
\[
 \binom{n-1}{r-1}\binom{k+1}{r+1}
 =\frac{k+1}{n}\frac{r}{r+1}\binom nr\binom kr,
\]
and
\[
 \binom{s+1}{t+1}\binom{k-1}{t-1}
 =\frac{s+1}{k}\frac{t}{t+1}\binom st\binom kt.
\]
Substituting these identities into \eqref{eq:bivar} and using \eqref{eq:J-coeff} gives \eqref{eq:rankone}.
\end{proof}

\section{Proof of the main theorem}\label{sec:proof}

We now use Corollary \ref{cor:rankone} to prove real stability of the nonnegative boundary pencils. We begin with the definitions. Let
\[
 \HH=\{z\in\C:\operatorname{Im}z>0\}.
\]
A polynomial with real coefficients in several variables is \emph{real stable} if it is nonzero whenever all variables lie in $\HH$.  In one variable, a nonzero real polynomial is real stable if and only if all of its zeros are real. In this paper, a \emph{Pick function} is a holomorphic map from $\HH$ into $\HH$. 

The relation between $H_{s,k}$ and $H_{s+1,k}$ that we need is
\begin{equation}\label{eq:adjacent}
 \frac{s+2}{k}TH_{s+1,k}-\frac{s+1}{k}TH_{s,k}
 =\frac{s+1}{k}\bigl(H_{s+1,k}-H_{s,k}\bigr).
\end{equation}
Both sides have zero constant term. For $q\geq1$, the coefficient of $x^{q}$ on the right is
\[
 \frac{s+1}{k}\binom{s}{q-1}\binom{k}{q},
\]
and the coefficient on the left has the same value by
\[
 \binom sq=\frac{s-q+1}{q}\binom{s}{q-1}.
\]

For $a,k\geq1$, put
\begin{equation}\label{eq:Q-def}
 Q_{a,k}(x)=JH_{a,k}(x)
 ={}_2F_1(-a,-k;2;x).
\end{equation}

\begin{lemma}\label{lem:jacobi}
Let $a,k\geq1$, $d=\min(a,k)$, and $\beta=|a-k|$.  Then
\begin{equation}\label{eq:jacobi}
 Q_{a,k}(x)=
 \frac{(1-x)^d}{d+1}
 P_d^{(1,\beta)}\!\left(\frac{1+x}{1-x}\right).
\end{equation}
Consequently, all $d$ zeros of $Q_{a,k}$ are simple and lie in $(-\infty,0)$.
\end{lemma}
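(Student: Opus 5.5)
Since $Q_{a,k}$ is symmetric in $a$ and $k$, we may assume $a\le k$, so that $d=a$ and $\beta=k-a\ge0$. The plan is to verify \eqref{eq:jacobi} coefficient by coefficient, using the explicit finite-sum representation of Jacobi polynomials. We then read off the location of the zeros from the classical zero theory of Jacobi polynomials, transported by a Möbius change of variable.

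\emph{Identity.} Start from the standard expansion
\[
 P_n^{(\alpha,\beta)}(z)=\sum_{m=0}^{n}\binom{n+\alpha}{n-m}\binom{n+\beta}{m}\Bigl(\frac{z-1}{2}\Bigr)^{m}\Bigl(\frac{z+1}{2}\Bigr)^{n-m}.
\]
Substitute $z=(1+x)/(1-x)$. This gives $(z-1)/2=x/(1-x)$ and $(z+1)/2=1/(1-x)$, so every summand carries exactly the factor $(1-x)^{-n}$. Hence
\[
 (1-x)^nP_n^{(\alpha,\beta)}\!\Bigl(\frac{1+x}{1-x}\Bigr)=\sum_{m=0}^{n}\binom{n+\alpha}{n-m}\binom{n+\beta}{m}x^m .
\]
Now take $n=d=a$, $\alpha=1$ and $\beta=k-a$. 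The coefficient of $x^m$ becomes $\binom{a+1}{m+1}\binom{k}{m}$. Dividing by $d+1=a+1$ and using $\binom{a+1}{m+1}/(a+1)=\binom{a}{m}/(m+1)$ gives
\[
 \frac{1}{m+1}\binom am\binom km .
\]
By \eqref{eq:J-coeff} this is exactly the coefficient of $x^m$ in $JH_{a,k}$. This proves \eqref{eq:jacobi}, and the ${}_2F_1$ form in \eqref{eq:Q-def} is just a restatement of the same coefficients.

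\emph{Zeros.} The parameters satisfy $\alpha=1>-1$ and $\beta\ge0>-1$. By orthogonality on $[-1,1]$ with respect to a positive weight, $P_d^{(1,\beta)}$ therefore has $d$ simple zeros $z_1,\dots,z_d$ in $(-1,1)$. The map $x\mapsto z=(1+x)/(1-x)$ has inverse $x=(z-1)/(z+1)$, which sends $(-1,1)$ bijectively and increasingly onto $(-\infty,0)$. So the points $x_i=(z_i-1)/(z_i+1)$ are $d$ distinct negative zeros of $Q_{a,k}$.

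It remains to check that there are no other zeros. $Q_{a,k}$ has degree exactly $d$, because its top coefficient $\binom{a}{d}\binom{k}{d}/(d+1)$ is nonzero. A polynomial of degree $d$ with $d$ distinct zeros has no further zeros, so these are all of them. There is also no spurious behaviour at $x=1$, since the identity is a polynomial identity.

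The only delicate step is keeping track of the normalization and the degree, so that the factor $(1-x)^d$ neither hides nor creates zeros; the degree count above settles this. Everything else is a direct computation.
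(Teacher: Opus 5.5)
Your proof is correct and follows the same route as the paper: you identify $Q_{a,k}$ with the M\"obius-transformed Jacobi polynomial $P_d^{(1,\beta)}$, then transport its $d$ simple zeros in $(-1,1)$ to $(-\infty,0)$. The differences are small: you verify the identity coefficient by coefficient from the explicit binomial-sum form of $P_d^{(\alpha,\beta)}$, where the paper uses the ${}_2F_1$ representation and Pfaff's transformation, and you state explicitly the degree check $\deg Q_{a,k}=d$, which the paper leaves implicit.
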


\begin{proof}
The hypergeometric definition of the Jacobi polynomial is
\[
 P_d^{(1,\beta)}(y)
 =(d+1)\,{}_2F_1\!\left(-d,d+\beta+2;2;\frac{1-y}{2}\right).
\]
Set $y=(1+x)/(1-x)$, so $(1-y)/2=-x/(1-x)$.  Pfaff's transformation
\[
 {}_2F_1(A,B;C;z)
 =(1-z)^{-A}{}_2F_1\!\left(A,C-B;C;\frac{z}{z-1}\right)
\]
with $A=-d$, $B=d+\beta+2$, $C=2$, and $z=-x/(1-x)$ yields \eqref{eq:jacobi}.  Since $\{d,d+\beta\}=\{a,k\}$, the resulting hypergeometric polynomial is $Q_{a,k}$.  Both sides of \eqref{eq:jacobi} are polynomials in $x$, so the identity holds at $x=1$ as well.

Since $1,\beta > -1$, the Jacobi polynomial has $d$ simple zeros in $(-1, 1)$; see \cite[Chapter~III, \S3.3]{Szego}. The inverse M\"obius transformation $x=(y-1)/(y+1)$ maps these to d simple zeros of $Q_{a,k}$ in $(-\infty,0)$.
\end{proof}

\begin{lemma}\label{lem:diff}
For $a,k\geq1$ and $Q=Q_{a,k}$,
\begin{equation}\label{eq:diff}
 H_{a,k}=Q+xQ',
 \qquad
 TH_{a,k}=xQ',
 \qquad
 H_{a+1,k}-H_{a,k}=x(kQ-xQ').
\end{equation}
\end{lemma}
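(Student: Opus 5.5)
The plan is to verify all three identities of \eqref{eq:diff} coefficientwise. The definitions \eqref{eq:HJT} and \eqref{eq:J-coeff} express $J$ and $T$ as diagonal multipliers on monomials, so everything reduces to comparing coefficients. Write $H=H_{a,k}=\sum_q h_q x^q$ with $h_q=\binom aq\binom kq$. By \eqref{eq:Q-def}, $Q=JH=\sum_q \frac{h_q}{q+1}x^q$.

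For the first identity, I will use that the operator $p\mapsto p+xp'$ multiplies the coefficient of $x^q$ by $q+1$; equivalently, $(xQ)'=Q+xQ'$. Applied to $Q$, it returns $\sum_q h_qx^q=H$; more conceptually, $xQ=\int_0^x H$, and differentiating gives $H=Q+xQ'$. The second identity then follows at once: $TH=H-JH=H-Q=xQ'$. As a consistency check, $xQ'=\sum_q \frac{q}{q+1}h_qx^q$, which matches \eqref{eq:J-coeff}.

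The third identity is the only step with real content. Pascal's rule gives $\binom{a+1}{q}-\binom aq=\binom a{q-1}$, so the coefficient of $x^q$ in $H_{a+1,k}-H_{a,k}$ is $\binom a{q-1}\binom kq$ for $q\ge1$, and the constant term is $0$. On the right side, $kQ-xQ'=\sum_m \frac{k-m}{m+1}h_m x^m$, so the coefficient of $x^{q}$ in $x(kQ-xQ')$ is $\frac{k-q+1}{q}\binom a{q-1}\binom k{q-1}$, taking $m=q-1$. The two sides agree because of the absorption identity $\binom kq=\frac{k-q+1}{q}\binom k{q-1}$. This is the same identity already used to verify \eqref{eq:adjacent}, with $k$ in place of $s$.

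I do not expect a genuine obstacle. The only point needing care is the index bookkeeping: both sides of the third identity have zero constant term, and the binomial convention makes every coefficient vanish beyond $q=\min(a+1,k)$ on both sides, so the formulas hold for all $q$ without separate boundary cases.
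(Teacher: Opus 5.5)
Your proposal is correct and matches the paper's proof: the first identity comes from $Q=JH_{a,k}$ via the coefficient form \eqref{eq:J-coeff}, and the second follows by subtracting $Q$. The third is a coefficientwise comparison of $x^q$ using Pascal's rule together with $\binom kq=\frac{k-q+1}{q}\binom k{q-1}$, and both constant terms vanish.
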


\begin{proof}
The first identity follows from $Q=JH_{a,k}$ and~\eqref{eq:J-coeff}; subtracting $Q$ gives the second.  For $q\geq1$, the coefficient of $x^q$ in $x(kQ-xQ')$ is
\[
 (k-q+1)\frac1q\binom{a}{q-1}\binom{k}{q-1}
 =\binom{a}{q-1}\binom{k}{q},
\]
which, by Pascal's identity, is the coefficient of $x^q$ in $H_{a+1,k}-H_{a,k}$.  Both constant coefficients are zero.
\end{proof}

\begin{lemma}\label{lem:pick}
Fix integers $a\geq 0$ and $k\geq 1$, and write
\[
 B=H_{a,k},
 \qquad
 \Delta=H_{a+1,k}-H_{a,k}.
\]
For $\lambda\geq0$ put
\begin{equation}\label{eq:C-N}
 C_\lambda=B+\lambda\Delta,
 \qquad
 N_\lambda=TB+\lambda\Delta.
\end{equation}
Every zero of $C_\lambda$ is simple and negative.  Moreover, $N_\lambda/C_\lambda$ is a Pick function, except that it is identically zero when $(a,\lambda)=(0,0)$.
\end{lemma}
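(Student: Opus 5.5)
The plan is to reduce everything to the single function $\psi=N_\lambda/Q$, where $Q=Q_{a,k}$, and to use the zero information of Lemma~\ref{lem:jacobi}. The case $a=0$ has to be handled on its own, because $Q_{0,k}$ is not covered by the lemmas. Here $B=H_{0,k}=1$, so $TB=0$ and $\Delta=H_{1,k}-1=kx$. Thus $C_\lambda=1+\lambda kx$ and $N_\lambda=\lambda kx$. The zero of $C_\lambda$ (if any) is $-1/(\lambda k)<0$ and is simple. Moreover
\[
N_\lambda/C_\lambda=1-\frac{1}{1+\lambda kx},
\]
which is Pick for $\lambda>0$ because $-1/w$ maps $\HH$ to $\HH$. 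For $\lambda=0$ it vanishes identically.

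Now let $a\geq1$ and set $d=\min(a,k)\geq1$. By Lemma~\ref{lem:jacobi}, $Q$ has simple zeros $\rho_1,\dots,\rho_d<0$. By Lemma~\ref{lem:diff},
\[
C_\lambda=Q+xQ'+\lambda x(kQ-xQ'),\qquad N_\lambda=xQ'+\lambda x(kQ-xQ').
\]
With $\varphi=xQ'/Q$ this gives $N_\lambda=Q\psi$ and $C_\lambda=Q(1+\psi)$, where $\psi=\varphi+\lambda x(k-\varphi)$. Partial fractions give
\[
\varphi=d+\sum_i\frac{\rho_i}{x-\rho_i},\qquad x(k-\varphi)=(k-d)x-\sum_i\rho_i-\sum_i\frac{\rho_i^2}{x-\rho_i}.
\]
Because $\rho_i<0$ and $k\geq d$, both pieces have nonnegative imaginary part on $\HH$, and $\varphi$ has strictly positive imaginary part there. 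Hence $\psi$ is a Pick function for every $\lambda\geq0$. Consequently $1+\psi$ is Pick, $-1/(1+\psi)$ is Pick, and so
\[
\frac{N_\lambda}{C_\lambda}=\frac{\psi}{1+\psi}=1-\frac{1}{1+\psi}
\]
is Pick. This settles the second assertion.

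For the zeros of $C_\lambda$, I would use the explicit form
\[
\psi(x)=\alpha x+\gamma-\sum_i\frac{m_i}{x-\rho_i},\qquad \alpha=\lambda(k-d)\geq0,\quad m_i=-\rho_i+\lambda\rho_i^2>0.
\]
Hence on $\R$ minus the poles, $\psi'(x)=\alpha+\sum_i m_i/(x-\rho_i)^2>0$, so $1+\psi$ is strictly increasing on each component of $\R\setminus\{\rho_i\}$. This makes every real zero of $1+\psi$ simple.

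The zeros of $\psi$ never cancel against the poles. Evaluating at a root $\rho$ of $Q$ gives $C_\lambda(\rho)=\rho Q'(\rho)(1-\lambda\rho)\neq0$, using simplicity of $\rho$, $\rho<0$ and $\lambda\geq0$. So the zeros of $C_\lambda$ are exactly the zeros of $1+\psi$, with the same multiplicities. Since $1+\psi$ has no zeros in $\HH$ and $C_\lambda$ has real coefficients, there are none in the lower half-plane either. Hence all zeros of $C_\lambda$ are real and simple.

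Finally, $C_\lambda$ has nonnegative coefficients and constant term $H_{a,k}(0)=1$, so $C_\lambda(x)\geq1$ for $x\geq0$. Therefore all zeros are negative.

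The main delicate point is the sign bookkeeping in the partial-fraction expansion of $x(k-\varphi)$. This is where the hypothesis $k\geq d$ enters, through the coefficient $(k-d)x$, together with the nonvanishing of $C_\lambda$ at the $\rho_i$. I expect everything else to be routine.
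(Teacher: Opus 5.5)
Your proof is correct, but it runs the paper's argument in the opposite direction. The paper expands $Q/C_\lambda$ in partial fractions over the zeros $c_i$ of $C_\lambda$. To do that it must first locate those zeros. It uses the sign alternation of $C_\lambda(\rho_i)=\rho_iQ'(\rho_i)(1-\lambda\rho_i)$ and the value $C_\lambda(0)=1$, then the three-case degree count \eqref{eq:degree-C} to rule out further zeros, and only then reads off $R_i>0$ from interlacing.

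You instead expand $\psi=N_\lambda/Q$ over the zeros $\rho_i$ of $Q$, which Lemma~\ref{lem:jacobi} already supplies. The Pick property of $\psi$, and hence of $N_\lambda/C_\lambda=1-1/(1+\psi)$, then follows from $\rho_i<0$ and $k\geq d$ alone. Real-rootedness of $C_\lambda=Q(1+\psi)$ becomes a corollary, since neither factor vanishes on $\HH$. Simplicity comes from $\psi'>0$, and negativity from nonnegative coefficients; the latter holds because $\Delta$ has coefficients $\binom{a}{q-1}\binom{k}{q}\geq0$. This removes the degree bookkeeping and the sign tracking of $Q'(\rho_i)$ entirely.

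What the paper's route buys is explicit information: the interlacing $\rho_1<c_1<\rho_2<\cdots<\rho_d<c_d<0$, plus an extra zero below $\rho_1$ when $a<k$ and $\lambda>0$, and the exact degree of $C_\lambda$. Neither is used later, since Theorem~\ref{thm:stable-pencil} only needs the Pick property and the nonvanishing of $C_\lambda$ on $\HH$. In any case, your monotonicity of $1+\psi$ on each component of $\R\setminus\{\rho_i\}$ already shows that each such component contains at most one zero, so the interlacing is available from your argument too.
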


\begin{proof}
Assume first that $a\geq1$.  Let $d=\min(a,k)$ and write the zeros of $Q=Q_{a,k}$ as
\[
 \rho_1<\rho_2<\cdots<\rho_d<0.
\]
By Lemma~\ref{lem:diff},
\begin{equation}\label{eq:CQ}
 C_\lambda=Q+xQ'+\lambda x(kQ-xQ'),
 \qquad
 \\N_\lambda=C_\lambda-Q.
\end{equation}
At a zero $\rho_i$ of $Q$,
\begin{equation}\label{eq:C-rho}
 C_\lambda(\rho_i)=\rho_iQ'(\rho_i)(1-\lambda\rho_i).
\end{equation}
The last factor is positive.  Since the leading coefficient of $Q$ is positive,
\[
 \operatorname{sgn}Q'(\rho_i)=(-1)^{d-i},
\]
and hence
\begin{equation}\label{eq:C-sign}
 \operatorname{sgn}C_\lambda(\rho_i)=(-1)^{d-i+1}.
\end{equation}
Also $C_\lambda(0)=1$.  Therefore $C_\lambda$ has a zero in each interval
\begin{equation}\label{eq:intervals}
 (\rho_i,\rho_{i+1})\quad(1\leq i<d),
 \qquad
 (\rho_d,0).
\end{equation}

To determine whether there are further zeros, we compute the degree of $C_\lambda$. If $\lambda=0$, then $C_\lambda=H_{a,k}$ has degree $d$.  If $\lambda>0$ and $a\geq k$, then $d=k$ and
\[
 [x^k]C_\lambda=\binom ak+\lambda\binom a{k-1}>0,
\]
so $\deg C_\lambda=d$.  If $\lambda>0$ and $a<k$, then $d=a$ and the coefficient of $x^{d+1}$ in $\lambda x(kQ-xQ')$ is
\[
 \lambda(k-d)[x^d]Q=\lambda\binom{k}{a+1}>0,
\]
so $\deg C_\lambda=d+1$.  Thus
\begin{equation}\label{eq:degree-C}
 \deg C_\lambda=
 \begin{cases}
 d,&\lambda=0,\\
 d,&\lambda>0\text{ and }a\geq k,\\
 d+1,&\lambda>0\text{ and }a<k.
 \end{cases}
\end{equation}
When $\deg C_\lambda= d$, each of the d intervals in \eqref{eq:intervals} contains exactly one zero, which
must be simple. When $\deg C_\lambda= d+1$, the positive leading coefficient and \eqref{eq:C-sign} give
an additional zero in $(-\infty,\rho_1)$. These account for all the zeros, so each is simple
and negative.

Let the zeros of $C_\lambda$ be $c_i$.  In the degree-$d$ case they satisfy
\[
 \rho_1<c_1<\rho_2<c_2<\cdots<\rho_d<c_d<0,
\]
and in the degree-$d+1$ case
\[
 c_0<\rho_1<c_1<\cdots<\rho_d<c_d<0.
\]
Since $Q$ and $C_\lambda$ have no common zero, every residue
\begin{equation}\label{eq:residue}
 R_i=\frac{Q(c_i)}{C_\lambda'(c_i)}
\end{equation}
is nonzero.  Interlacing shows that $Q(c_i)$ and $C_\lambda'(c_i)$ have the same sign, hence
\begin{equation}\label{eq:res-positive}
 R_i>0.
\end{equation}
Partial fractions therefore give, for $z=x+iy\in\HH$,
\begin{equation}\label{eq:negative-im}
 \operatorname{Im}\frac{Q(z)}{C_\lambda(z)}
 =-y\sum_i\frac{R_i}{|z-c_i|^2}<0.
\end{equation}
Using $N_\lambda/C_\lambda=1-Q/C_\lambda$ from~\eqref{eq:CQ}, we obtain
\[
 \operatorname{Im}\frac{N_\lambda(z)}{C_\lambda(z)}>0.
\]
Thus $N_\lambda/C_\lambda$ is a Pick function.

If $a=0$, then $B=1$, $\Delta=kx$, and
\[
 C_\lambda=1+\lambda kx,
 \qquad
 N_\lambda=\lambda kx.
\]
For $\lambda=0$ the quotient is identically zero.  For $\lambda>0$, $C_\lambda$ has the single simple negative zero $-1/(\lambda k)$ and
\[
 \operatorname{Im}\frac{\lambda kz}{1+\lambda kz}
 =\frac{\lambda k\operatorname{Im}z}{|1+\lambda kz|^2}>0.
\]
\end{proof}

The Pick function property can be used to prove the following important real stability result.

\begin{theorem}\label{thm:stable-pencil}
Let $n,k\geq1$ and $0\leq s\leq n-1$.  For every $\lambda\geq0$, the bivariate polynomial
\begin{equation}\label{eq:Phi}
 \Phi_\lambda(u,v)=
 F^k_{n,s}(u,v)
 +\lambda\bigl(F^k_{n,s+1}(u,v)-F^k_{n,s}(u,v)\bigr)
\end{equation}
is real stable.
\end{theorem}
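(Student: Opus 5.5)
The plan is to reduce $\Phi_\lambda$, via Corollary~\ref{cor:rankone} and the adjacency relation \eqref{eq:adjacent}, to the rank-two form
\[
 \Phi_\lambda(u,v)=H_{n,k}(u)\,C_\lambda(v)-\gamma\,(TH_{n,k})(u)\,N_\lambda(v),
 \qquad \gamma=\frac{(k+1)(s+1)}{nk}>0,
\]
where $C_\lambda=H_{s,k}+\lambda(H_{s+1,k}-H_{s,k})$ and $N_\lambda=TH_{s,k}+\lambda(H_{s+1,k}-H_{s,k})$ are the polynomials of Lemma~\ref{lem:pick} with $a=s$. Real stability then follows from an argument-counting fact about products of two Pick functions.

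\emph{Step 1 (algebraic reduction).} Write $\mathcal{F}^k_{n,s}$ and $\mathcal{F}^k_{n,s+1}$ by \eqref{eq:rankone}. The $H\cdot H$ parts combine to $H_{n,k}(u)C_\lambda(v)$. The correction terms combine to
\[
 -\frac{k+1}{n}\,(TH_{n,k})(u)\Bigl[\tfrac{s+1}{k}TH_{s,k}+\lambda\bigl(\tfrac{s+2}{k}TH_{s+1,k}-\tfrac{s+1}{k}TH_{s,k}\bigr)\Bigr](v).
\]
By \eqref{eq:adjacent} the bracket equals $\frac{s+1}{k}\bigl(TH_{s,k}+\lambda(H_{s+1,k}-H_{s,k})\bigr)=\frac{s+1}{k}N_\lambda$, which gives the displayed form. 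Here $s+1\le n$, so $P_{n,s+1}$ is a legitimate shape and Corollary~\ref{cor:rankone} applies to both terms. I expect this bookkeeping of constants to be the only delicate point; the rest is soft.

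\emph{Step 2 (Pick functions).} Apply Lemma~\ref{lem:pick} twice.

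\begin{itemize}
\item With $a=n\ge1$ and $\lambda=0$: the zeros of $H_{n,k}$ are real (negative), and $f(u)=TH_{n,k}(u)/H_{n,k}(u)$ is a Pick function.
\item With $a=s$ and the given $\lambda$: the zeros of $C_\lambda$ are real, and $g(v)=N_\lambda(v)/C_\lambda(v)$ is Pick, except in the degenerate case $(s,\lambda)=(0,0)$.
\end{itemize}

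In the degenerate case $N_0=0$ and $C_0=1$, so $\Phi_0=H_{n,k}(u)$. This is a nonzero univariate real-rooted polynomial, hence real stable.

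\emph{Step 3 (nonvanishing on $\HH^2$).} Fix $u,v\in\HH$. Then $H_{n,k}(u)\neq0$ and $C_\lambda(v)\ne0$, since all their zeros are real. Dividing,
\[
 \frac{\Phi_\lambda(u,v)}{H_{n,k}(u)C_\lambda(v)}=1-\gamma f(u)g(v).
\]
Since $f(u),g(v)\in\HH$, their arguments lie in $(0,\pi)$. Hence $\arg\bigl(f(u)g(v)\bigr)\in(0,2\pi)$, so $f(u)g(v)$ is never a positive real number. In particular $f(u)g(v)\ne1/\gamma$, and $\Phi_\lambda(u,v)\neq0$.

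Finally, $\Phi_\lambda$ has real coefficients and is nonzero; for instance its constant term is $1$. It is therefore real stable.
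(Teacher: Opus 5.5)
Your proposal is correct and follows the paper's proof essentially step for step. You use the same rank-two factorization $\Phi_\lambda=H_{n,k}(u)C_\lambda(v)-\gamma\,(TH_{n,k})(u)N_\lambda(v)$ obtained from Corollary~\ref{cor:rankone} and \eqref{eq:adjacent}, the same two applications of Lemma~\ref{lem:pick}, the same separate treatment of the degenerate case $(s,\lambda)=(0,0)$, and the same fact that a product of two points of $\HH$ is never a positive real (you argue via arguments in $(0,\pi)$, where the paper computes real and imaginary parts directly).
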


\begin{proof}
Put
\[
 A=H_{n,k},\qquad
 B=H_{s,k},\qquad
 \Delta=H_{s+1,k}-H_{s,k},
 \qquad
 \eta=\frac{(k+1)(s+1)}{nk}.
\]
Applying Corollary \ref{cor:rankone} to the two adjacent shapes and using \eqref{eq:adjacent}, we obtain
\begin{equation}\label{eq:Phi-factor}
 \Phi_\lambda(u,v)
 =A(u)C_\lambda(v)-\eta(TA)(u)N_\lambda(v),
\end{equation}
where
\[
 C_\lambda=B+\lambda\Delta,
 \qquad
 N_\lambda=TB+\lambda\Delta.
\]
Let $u,v\in\HH$.  Lemma~\ref{lem:pick}, applied to $(a,\lambda)=(n,0)$ and $(a,\lambda)=(s,\lambda)$, shows that $A(u)C_\lambda(v)\neq0$ and that
\[
 R_1(u)=\frac{(TA)(u)}{A(u)},
 \qquad
 R_2(v)=\frac{N_\lambda(v)}{C_\lambda(v)}
\]
both lie in $\HH$, except that $R_2\equiv0$ when $s=0$ and $\lambda=0$.  Dividing~\eqref{eq:Phi-factor} by $A(u)C_\lambda(v)$ gives
\begin{equation}\label{eq:Phi-normalized}
 \frac{\Phi_\lambda(u,v)}{A(u)C_\lambda(v)}
 =1-\eta R_1(u)R_2(v).
\end{equation}
The product of two points of $\HH$ cannot be a positive real number.  Indeed, if $z_r=x_r+iy_r$ with $y_r>0$ and $\operatorname{Im}(z_1z_2)=0$, then
\[
 x_2=-\frac{x_1y_2}{y_1},
 \qquad
 \operatorname{Re}(z_1z_2)
 =-\frac{y_2}{y_1}(x_1^2+y_1^2)<0.
\]
Since $1/\eta>0$, the right-hand side of~\eqref{eq:Phi-normalized} cannot vanish.  In the exceptional case $s=0$, $\lambda=0$, we have $C_\lambda=1$, $N_\lambda=0$, and $\Phi_\lambda(u,v)=A(u)\neq0$.  Hence $\Phi_\lambda$ is real stable.
\end{proof}

It remains to deduce the existence of a common interleaver from real stability. Two nonzero real polynomials $p,q$ are called \emph{compatible} if
\[
 \alpha p+\beta q
\]
is real-rooted for all $\alpha,\beta\geq0$ not both zero. The Chudnovsky--Seymour criterion~\cite[Theorem~3.6]{CS} relates compatibility to common interleavers as follows.

\begin{theorem}\label{thm:CS}
A finite family of real-rooted real polynomials with positive leading coefficients is compatible if and only if their root sequences admit a common interleaving.  In particular, a compatible pair with positive leading coefficients has a common interleaver.
\end{theorem}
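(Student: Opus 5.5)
The statement is Theorem~3.6 of \cite{CS}. Still, here is how I would prove it, in the generality we need: finitely many real-rooted polynomials $f_1,\dots,f_m$, each with positive leading coefficient. For the direction ``common interleaving $\Rightarrow$ compatible'', suppose every $f_i$ interlaces a single real-rooted $h$ with positive leading coefficient. I would first perturb to the generic case, with simple roots and no root shared between $f_i$ and $h$, and recover the general case by Hurwitz's theorem, since limits of real-rooted polynomials are real-rooted or zero. In the generic case each $f_i/h$ has a partial-fraction expansion $c_i+\sum_j R_{ij}/(x-\theta_j)$, where $\theta_1,\dots,\theta_e$ are the roots of $h$ and the sign pattern of the residues $R_{ij}$ is forced by the interlacing. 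This is exactly the residue argument used in Lemma~\ref{lem:pick}. A nonnegative combination $\sum\alpha_if_i$ divided by $h$ then has residues of the same sign pattern. Hence it changes sign between consecutive $\theta_j$, and also on the outer unbounded interval when the degree is $e+1$. Counting these sign changes gives as many real roots as the degree, so $\sum\alpha_if_i$ is real-rooted.

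For the converse I would reduce to pairs. The key claim is this: if $p,q$ are compatible, have positive leading coefficients and degrees differing by at most one, and their roots are listed in weakly decreasing order as $p_1\ge p_2\ge\cdots$ and $q_1\ge q_2\ge\cdots$, then $p_j\ge q_{j+1}$ and $q_j\ge p_{j+1}$ for all $j$. To prove it, consider $g_t=(1-t)p+tq$ for $t\in[0,1]$. Every $g_t$ is real-rooted, and its roots vary continuously in $t$. On any interval avoiding common roots, $g_t(x)=0$ means $t=p(x)/(p(x)-q(x))$, so each root moves monotonically. If instead, say, $q_{j+1}>p_j$, then on an interval around $[p_j,q_{j+1}]$ the signs of $p$ and $q$ would disagree at an even number of points where an odd number is needed. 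As $t$ varies, two real roots of $g_t$ would then be forced to collide and leave the real line, contradicting compatibility. Compatibility also forces the degrees to be close, since the roots of $g_t$ near $t\to0$ or $t\to1$ must escape to infinity along the real line. I expect this step to be the main obstacle: making the collision argument rigorous with multiplicities and unequal degrees. I would first do it for simple, disjoint roots by an intermediate-value count on the intervals $[q_{j+1},q_j]$, and then pass to the general case by approximation.

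Given the pairwise inequalities, I would build $h$ greedily. Let $D=\max_i\deg f_i$. For each $j$, choose a root $\theta_j$ of $h$ in the closed interval
\[
 \Bigl[\max_i r_{i,j+1},\ \min_i r_{i,j}\Bigr],
\]
where $r_{i,j}$ denotes the $j$-th largest root of $f_i$, with $r_{i,j}=-\infty$ when $j>\deg f_i$ and with the interval taken as unbounded at the ends. Each interval is nonempty precisely by the pairwise inequalities $r_{l,j}\ge r_{i,j+1}$, and the $\theta_j$ can be chosen weakly decreasing. Taking $h=\prod_j(x-\theta_j)$ with the appropriate number of factors then gives $f_i\preceqint h$ for every $i$. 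This is the Helly-type step: pairwise interleavability implies a common interleaver because in one dimension intervals that pairwise meet have a common point. The ``in particular'' clause is the case $m=2$.
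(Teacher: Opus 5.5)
The paper gives no proof of this statement; it imports it from \cite[Theorem~3.6]{CS}. Two parts of your argument are fine. The first is the residue argument for ``interleaver $\Rightarrow$ compatible'', up to one slip: with your expansion $c_i+\sum_j R_{ij}/(x-\theta_j)$ one has $\deg f_i\le e$, so the extra root on $(-\infty,\theta_e)$ appears when the combination has degree $e$, not $e+1$. The second is the Helly-type assembly of $h$ from the pairwise inequalities.

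\textbf{The gap.} The gap is the key pairwise claim, which is the entire content of the hard direction. The formula $t=p(x)/(p(x)-q(x))$ does not by itself make the roots move monotonically. That requires $g_t$ to have no multiple roots, and this is exactly what compatibility must be shown to give. More seriously, your plan to do simple, disjoint roots first and then ``pass to the general case by approximation'' fails, because compatibility is closed under limits (as in Lemma~\ref{lem:RT}) but not open. For example, $p=q=x^2$ is a compatible pair. The nearby pair $(x-\epsilon)(x-2\epsilon)$, $(x+\epsilon)(x+2\epsilon)$ has simple, disjoint roots, yet its sum $2x^2+4\epsilon^2$ has nonreal roots. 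So you cannot reach compatible generic pairs by moving roots without already knowing the interlacing. Perturbing a violating pair does not help either: it only shows the perturbed pair is incompatible, and incompatibility does not survive the limit.

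\textbf{How to fix it.} The missing idea is to handle multiplicities directly.

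First, for $t_0\in(0,1)$, any root $x_0$ of $g_{t_0}$ that is not a common root of $p$ and $q$ must be simple. Near $x_0$ you have $g_t=(p-q)(\phi-t)$ with $\phi=p/(p-q)$ analytic and nonconstant. A zero of $\phi-t_0$ of order $m\ge2$ splits into $m$ roots $\approx x_0+\bigl((t-t_0)/c\bigr)^{1/m}\omega$, and two of these are nonreal for suitable $t\in(0,1)$, contradicting compatibility.

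Now suppose $q_{j+1}>p_j$. Pick $x_0\in(p_j,q_{j+1})$ that is a root of neither polynomial, and let $N(t)$ count the roots of $g_t$ in $(x_0,\infty)$.
\begin{itemize}
\item On $(0,1)$ the degree is constant and all roots are real, so $N$ changes only when $(1-t)p(x_0)+tq(x_0)=0$.
\item That happens for at most one $t$, and there $N$ changes by at most one, because the root at $x_0$ is simple.
\item At an endpoint where the degree drops, the lost root escapes to $-\infty$, since both leading coefficients are positive.
\end{itemize}
Hence $N(0^+)=\#\{i:p_i>x_0\}\le j-1$ and $N(1^-)=\#\{i:q_i>x_0\}\ge j+1$, yet these differ by at most one, a contradiction. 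This proves the pairwise inequalities with no perturbation at all, and the rest of your argument then goes through.
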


We will also need the fact that real-rootedness is preserved under coefficient
limits of bounded degree, provided the limit is nonzero.

\begin{lemma}\label{lem:RT}
Fix a nonnegative integer \(d\ge 0\), and let \((p_m)\) be a sequence of real-rooted real polynomials
of degree at most \(d\). View their coefficient vectors as elements of \(\R^{d+1}\) by padding with zeros. If these vectors converge to the coefficient vector of a nonzero polynomial \(p\), then \(p\) is real-rooted.
\end{lemma}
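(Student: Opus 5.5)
This is Hurwitz's theorem specialized to real-rootedness. I would argue by contradiction using continuity of zeros, via Hurwitz's theorem on the upper half-plane $\HH$.

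First I reduce to a statement about zeros off the real line. A real polynomial is real-rooted precisely when it has no zero in $\HH$: nonreal zeros of a real polynomial come in conjugate pairs, so one of each pair lies in $\HH$. Suppose, for contradiction, that $p$ has a zero $z_0\in\HH$. Since $p\neq0$, its zeros are isolated. Choose a closed disc $\overline{D}=\{|z-z_0|\le r\}\subseteq\HH$ on whose boundary circle $p$ has no zero.

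Next I transfer the convergence from coefficients to functions. Coefficient convergence in $\R^{d+1}$, with degrees bounded by $d$, implies $p_m\to p$ uniformly on compact subsets of $\C$. On a compact set $K$ we have
\[
|p_m(z)-p(z)|\le \sum_{q=0}^{d}|c_{m,q}-c_q|\,\max_{K}|z|^q ,
\]
and the right side tends to $0$.

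Then I apply Rouché's theorem on $\overline{D}$. Let $\delta=\min_{|z-z_0|=r}|p(z)|>0$. For $m$ large, $|p_m-p|<\delta$ on the circle. Hence $p_m$ and $p$ have the same number of zeros inside $D$, counted with multiplicity. That number is at least $1$, since $z_0\in D$. So $p_m$ has a zero in $D\subseteq\HH$, which contradicts the real-rootedness of $p_m$. Therefore $p$ has no zero in $\HH$ and is real-rooted.

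The hypothesis $p\neq0$ is essential. Without it, the zeros of the limit would not be isolated and $\delta$ would vanish. It is also the only point needing care: a drop in degree in the limit (leading coefficients tending to zero) causes no trouble. Rouché's theorem is local, so zeros of $p_m$ escaping to infinity are irrelevant.

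Finally, as a remark on how this lemma will be used: $\Phi_\lambda$ is real stable, so its diagonal specialization $\Phi_\lambda(x,x)$ is real-rooted for every $\lambda\ge0$. Dividing by $1+\lambda$ and letting $\lambda\to\infty$ yields the endpoint $f^k_{n,s+1}-f^k_{n,s}$. The lemma then covers the nonnegative combinations $\alpha\mathcal{L}+\beta\mathcal{R}$ with $\alpha=0$, which establishes the compatibility needed for Theorem~\ref{thm:CS}.
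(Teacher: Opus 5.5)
Your proof is correct and follows the paper's argument. Both pick a small disc around a nonreal zero of $p$ that avoids the real axis and has no zeros of $p$ on its boundary, upgrade coefficient convergence to uniform convergence on the circle, and use Rouch\'e's theorem to force a nonreal zero of $p_m$; your restriction to $\HH$ via conjugate pairs and the explicit uniform bound just add detail to that route.
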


\begin{proof}
Suppose \(p\) has a nonreal zero \(z_0\). Choose a closed disk \(\mathcal D\) centered at \(z_0\) whose closure is disjoint from the real axis and whose boundary contains no zero of \(p\). Coefficient convergence implies uniform convergence on \(\partial \mathcal D\). For all sufficiently large \(m\),
\[
|p_m(z)-p(z)|<|p(z)|\qquad(z\in\partial \mathcal D).
\]
By Rouch\'e's theorem, \(p_m\) and \(p\) have the same number of zeros in \(\mathcal D\), counted with multiplicity. This contradicts the real-rootedness of \(p_m\).
\end{proof}

We now return to Theorem \ref{thm:main}.

\begin{proof}[Proof of Theorem~\ref{thm:main}]
For $\lambda\geq0$, consider the diagonal specialization
\begin{equation}\label{eq:p-lambda}
 p_\lambda(x)=
 f^k_{n,s}(x)
 +\lambda\bigl(f^k_{n,s+1}(x)-f^k_{n,s}(x)\bigr)
\end{equation}
This polynomial is real-rooted by Theorem \ref{thm:stable-pencil}. Indeed, for $z\in\HH$ we have $p_\lambda(z)=\Phi_\lambda(z,z)\neq0$. Since the coefficients are real, $p_\lambda$ has no nonreal zeros. Its constant term is 1, so it is nonzero.

Set
\[
 f=f^k_{n,s},
 \qquad
 g=f^k_{n,s+1}-f^k_{n,s}.
\]
Then $f=p_0$ is real-rooted. By Proposition \ref{prop:endpoint}, $g$ is the generating polynomial for
antichains of $[k]\times P_{n,s+1}$ that meet
\[
 E_s=\{(p,(2,s+1)):p\in[k]\}.
\]
Exactly $k$ one-element antichains meet $E_s$, so
\begin{equation}\label{eq:xg}
 [x]g=k>0.
\end{equation}
In particular, $g\neq0$.

For every positive integer $m$,
\[
 q_m=m^{-1}p_m=m^{-1}f+g
\]
is real-rooted. The coefficients of $q_m$ converge to those of $g$, and the degrees are
bounded by $\max(\deg f,\deg g)$. Since $g \neq 0$, Lemma \ref{lem:RT} shows that $g$ is real-rooted.

If $\alpha>0$ and $\beta\geq0$, then
\[
 \alpha f+\beta g=\alpha p_{\beta/\alpha}
\]
is a nonzero real-rooted polynomial.  If $\alpha=0$, then $\beta>0$ and $\beta g$ is nonzero and real-rooted.  Hence $f$ and $g$ are compatible. Both polynomials have nonnegative coefficients. Since $[x^0]f=1$ and $[x]g=k>0$ by \eqref{eq:xg}, their leading coefficients are
positive. Theorem 3.5 therefore gives a common interleaver for $f$ and $g$. Finally, Proposition~\ref{prop:endpoint} identifies
\[
 (f,g)=\bigl(\mathcal{L}^k_{n,s},\mathcal{R}^k_{n,s}\bigr),
\]
which proves the theorem.
\end{proof}


\begin{thebibliography}{99}

\bibitem{CS}
M.~Chudnovsky and P.~Seymour,
\emph{The roots of the independence polynomial of a clawfree graph},
J. Combin. Theory Ser. B \textbf{97} (2007), 350--357.

\bibitem{DD}
J.~Ding and C.~P.~Dong,
\emph{Antichain generating polynomials of posets},
arXiv:1905.06692v1 [math.CO] (2019).

\bibitem{DW}
C.~P.~Dong and G.~Weng, \emph{Minuscule representations and Panyushev conjectures}, Sci. China Math. 61 (2018), no.10, 1759-1774

\bibitem{Jiang}
W.~Jiang,
\emph{Real stability of layer-refined antichain polynomials for three-chain products with a two-element factor},
preprint (2026).

\bibitem{KS}
C.~Krattenthaler and R.~A.~Sulanke,
\emph{Counting pairs of nonintersecting lattice paths with respect to weighted turns},
Discrete Math. \textbf{153} (1996), 189--198.

\bibitem{Szego}
G.~Szeg\H{o},
\emph{Orthogonal Polynomials}, 4th ed.,
American Mathematical Society, Providence, RI, 1975.

\end{thebibliography}
\end{document}